\documentclass[12pt,a4paper]{amsart}
\usepackage{amsfonts}
\usepackage{amsthm}
\usepackage{amsmath, amscd, amssymb, bm}
\usepackage[latin2]{inputenc}
\usepackage{t1enc}
\usepackage[mathscr]{eucal}
\usepackage{indentfirst}
\usepackage{graphicx}
\usepackage{graphics, tikz}
\usepackage{hyperref}
\usepackage{pict2e}
\usepackage{epic}
\numberwithin{equation}{section}
\usepackage[margin=2.9cm]{geometry}
\usepackage{epstopdf}

\DeclareMathOperator{\ming}{\mathcal{M}}
\DeclareMathOperator{\reg}{reg}
\DeclareMathOperator{\pd}{pd}
\DeclareMathOperator{\im}{im}
\DeclareMathOperator{\ind}{Ind}
\DeclareMathOperator{\sdeg}{deg}

\theoremstyle{plain}
\newtheorem{theorem}{Theorem}[section]
\newtheorem{lemma}[theorem]{Lemma}
\newtheorem{cor}[theorem]{Corollary}
\newtheorem{proposition}[theorem]{Proposition}

\theoremstyle{definition}

\newtheorem{remark}[theorem]{Remark}

\newtheorem{example}[theorem]{Example}

\begin{document}

\title{On the cover ideals of chordal graphs}

\author[N. Erey]{Nursel Erey}

\address{Gebze Technical University \\ Department of Mathematics \\
41400 Kocaeli \\ Turkey} 

\email{nurselerey@gtu.edu.tr}

\thanks{Research supported by T\"{U}B\.{I}TAK grant no 118C033}

 \subjclass[2010]{13D02, 13F55, 05C25, 05E40}

 \keywords{chordal graph, cover ideal, shellable, linear quotients, Betti numbers}

\begin{abstract} 
 The independence complex of a chordal graph is known to be shellable due to a result of Van Tuyl and Villarreal \cite{vv}. This is equivalent to the fact that cover ideal of a chordal graph has linear quotients. We use this result to obtain recursive formulas for the Betti numbers of cover ideals of chordal graphs. Also, we give a new proof of their result which yields different shellings of the independence complex.

\end{abstract}

\maketitle
%--------------------------------------------------
%               Introduction
%--------------------------------------------------

\section{Introduction}

Let $G$ be a finite simple graph with the vertex set $\{x_1,\dots ,x_n\}$ and let $S=\Bbbk[x_1,\dots ,x_n]$ denote the polynomial ring over some field $\Bbbk$. The edge ideal of $G$, denoted by $I(G)$, is a quadratic squarefree monomial ideal generated by $x_ix_j$ where $\{x_i,x_j\}$ is an edge of $G$. The cover ideal of $G$ is defined by
$$J(G)=(x_{i_1}\dots x_{i_k}: \{x_{i_1},\dots ,x_{i_k}\} \text{ is a minimal vertex cover of } G).$$

Edge and cover ideals have been extensively studied in the literature. The class of chordal graphs arose particularly in the study of such ideals. For instance, according to a celebrated result of Fr\"{o}berg \cite{froberg} the edge ideal of a chordal graph has a linear minimal free resolution if and only if its complement graph is chordal. The authors of \cite{ha van tuyl} obtained recursive formulas for the Betti numbers of edge ideals of chordal graphs and characterized chordal graphs whose edge ideals have linear minimal free resolutions. Such recursive formulas were also used in \cite{kimura f vector} to relate Betti sequences of edge ideals of chordal graphs to $f$-vectors of simplicial complexes.

Francisco and Van Tuyl \cite{francisco van tuyl} proved that if $G$ is a chordal graph, then $S/I(G)$ is sequentially Cohen-Macaulay. Van Tuyl and Villarreal \cite{vv} proved the stronger result that chordal graphs have shellable independence complexes which is equivalent to the statement that cover ideals of chordal graphs have linear quotients. We use this result to obtain recursive formulas for the Betti numbers of cover ideals of chordal graphs which are analogous to those in \cite{ha van tuyl}. As an application of this recursive formula we show that the regularity of the edge ideal of a chordal graph is one more than the induced matching number of the graph (which was originally proved in \cite{zheng} and recovered by other authors \cite{ha van tuyl, kimura, woodroofe}). For some family of chordal graphs (Section \ref*{unmixed section}) our recursive procedure gives exact formulas for the Betti numbers.

In Section \ref{sec:shellings} we recover the result that $J(G)$ has linear quotients when $G$ is chordal and, our proof is based on improving the arguments in \cite{francisco van tuyl}. This yields different shellings of the independence complex than those in \cite{vv}.

%---------------------------------------------
%     Definitions and notation
%--------------------------------------------
\section{Definitions and Notations}

Let $G=(V(G), E(G))$ be a finite simple graph. We call two vertices $u$ and $v$ \textbf{neighbors} if they are adjacent. The \textbf{neighborhood} of $v$, denoted by $N(v)$, is the set of all neighbors of $v$. The \textbf{closed neighborhood} of $v$, denoted by $N[v]$, is $N(v)\cup \{v\}$. A \textbf{complete} graph (or clique) on $n$ vertices is denoted by $K_n$.

For any $A\subseteq V(G)$ the graph $G\setminus A$ stands for the subgraph obtained from $G$ by removing the vertices in $A$.

A graph is \textbf{chordal} if it has no induced cycles of length greater than $3$.  A \textbf{simplicial elimination ordering} in a graph is an ordering of the vertices of the graph such that, for each vertex $v$, $v$ and the neighbors of $v$ that come after $v$ in the order induce a clique. Due to a well-known result of Dirac \cite{dirac} a graph is chordal if and only if it has a simplicial elimination ordering.

 A \textbf{matching} of $G$ is a set of pairwise disjoint edges of $G$. A matching $\{e_1,\dots ,e_k\}$ is called \textbf{induced matching} if $\{u,v\}$ is not an edge of $G$ whenever $u\in e_i$, $v\in e_j$ and $i\neq j$. The \textbf{induced matching number} of $G$, denoted by $\im(G)$, is the maximum cardinality of an induced matching of $G$.

 A \textbf{vertex cover} $C\subseteq V(G)$ of $G$ is a set of vertices such that $C\cap e\neq \emptyset$ for every edge $e\in E(G)$. A vertex cover is called minimal if no proper subset of it is a vertex cover. We will denote the set of \textbf{minimal vertex covers }of $G$ by $\ming(G)$. If $G$ has no edges, then we set $\ming(G)=\{\emptyset\}$. An \textbf{independent} set of $G$ is a set vertices which contains no edges. We call an independent set \textbf{maximal} if it cannot be extended to a bigger independent set.
 
Given a simplicial complex $\Delta$, the \textbf{dimension} of a face $F$ is $\dim(F)=|F|-1$. The dimension of $\Delta$, denoted by $\dim(\Delta)$, is the maximum dimension of all its faces.  A maximal face of $\Delta$ is called a \textbf{facet}. A \textbf{free vertex} of $\Delta$ is a vertex that belongs to exactly one facet of $\Delta$.  
\textbf{Independence complex} of a graph $G$, denoted by $\ind(G)$, is a simplicial complex whose facets are maximal independent sets of $G$. The \textbf{clique complex} of $G$ denoted by $\Delta(G)$, is a simplicial complex whose faces correspond to cliques of $G$.

A simplicial complex $\Delta$ is called \textbf{shellable} if there exists an order $F_1,\dots , F_k $ on the facets of $\Delta$ such that for all $1\leq i<j\leq k$, there exists a vertex $u\in F_j\setminus F_i$ and some $\ell\in\{1,\dots ,j-1\}$ with $F_j\setminus F_\ell =\{u\}$. In such case, we call the order $F_1,\dots , F_k $ a \textbf{shelling} of $\Delta$.

Let $G$ be a graph with the vertex set $\{x_1,\dots ,x_n\}$ and let $S=\Bbbk[x_1,\dots ,x_n]$ for some field $\Bbbk$. The \textbf{edge ideal} of $G$, denoted by $I(G)$, is defined by
$$I(G)=(x_ix_j : \{x_i,x_j\} \text{ is an edge of } G). $$

The \textbf{cover ideal} of $G$, denoted by $J(G)$, is defined by 
$$J(G)=(x_{i_1}\dots x_{i_k}: \{x_{i_1},\dots ,x_{i_k}\} \text{ is a minimal vertex cover of } G).$$

\textbf{Graded Betti numbers} of a monomial ideal $I$ are denoted by $b_{i,j}(I)$. The projective dimension of $I$ is defined by 
$$\pd(I)=\max\{i: b_{i,j}(I)\neq 0 \text{ for some } j \}  $$
and the \textbf{regularity} of $I$ is defined by
$$\reg(I)=\max\{j: b_{i,i+j}(I)\neq 0 \text{ for some } i \}.  $$
For further details on these definitions the readers can refer to \cite{hh book}.
A monomial ideal $I\subset \Bbbk[x_1,\dots ,x_n]$ is said to have \textbf{linear quotients} if there is an ordering $m_1,\dots ,m_q$ on the minimal monomial generators of $I$ such that for every $i=2,\dots ,q$ the ideal $(m_1,\dots ,m_{i-1}):m_i$ is generated by a subset of $\{x_1,\dots ,x_n\}$. In such case we say $m_1,\dots, m_q$ is a \textbf{linear quotients ordering}.

To simplify the notation we will use sets of vertices with monomials interchangeably. 
A squarefree monomial $m$ will substitute for the set $\{x_i: x_i| m\}$. Similarly, a set of vertices $A=\{x_{i_1},\dots ,x_{i_k}\}$ will substitute for the monomial $x_{i_1}\dots x_{i_k}$.

%------------------------------------------------------------
%     A new shelling of independence complex
%------------------------------------------------------------
\section{Shellings of independence complex of a chordal graph}\label{sec:shellings}
The complement of an independent set of a graph $G$ is a vertex cover. Therefore, 
$$\ind(G)=\langle F_1,\dots ,F_k \rangle \Longleftrightarrow J(G)=(F_1^c,\dots ,F_k^c).$$ 

Observe that $F_1,\dots ,F_k$ is a shelling of $\ind(G)$ if and only if $F_1^c,\dots ,F_k^c$ is a linear quotients ordering for $J(G)$. Van Tuyl and Villarreal \cite{vv} proved that independence complex of a chordal graph is shellable \cite[Theorem 2.13]{vv}. Analyzing inductive argument of their proof, one can deduce the following result.
% Also in page 180 of monomial ideals book

\begin{theorem}\cite{vv}\label{thm:vv linear quotients} Let $G$ be a chordal graph with a vertex $x_1$ such that $N[x_1]=\{x_1,\dots,x_r\}$ induces a clique for some $r\geq 2$. For each $i=1,\dots ,r$, let $G_i=G\setminus N[x_i]$ and $\displaystyle y_i=\prod_{x_j\in N(x_i)} x_j$. Suppose that for each $i$, $u^i_1,\dots ,u^i_{s_i} $ is a linear quotients ordering for $J(G_i)$. Then
	$$y_1 u^1_1 ,\dots ,y_1 u^1_{s_1}; y_2 u^2_1 ,\dots ,y_2 u^2_{s_2};\cdots ; y_r u^r_1 ,\dots ,y_r u^r_{s_r} $$
	is a linear quotients ordering for $J(G)$. Note here that if $G_i$ has no edges, then the sequence $y_i u^i_1 ,\dots ,y_i u^i_{s_i}$ is just $y_i$.
\end{theorem}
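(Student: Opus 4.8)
The plan is to first establish that the displayed list enumerates the minimal generators of $J(G)$ exactly once, and then to verify the linear quotients condition generator-by-generator, splitting into the cases where the two generators lie in the same ``block'' $y_i u^i_1,\dots,y_i u^i_{s_i}$ or in different blocks.

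For the enumeration I would record the structural fact that, since $N[x_1]=\{x_1,\dots,x_r\}$ is a clique and $x_1$ is simplicial, every minimal vertex cover $C$ of $G$ omits exactly one vertex $x_i$ of this clique: at least $r-1$ of them must lie in $C$ to cover the clique's edges, and if all $r$ were in $C$ then $N(x_1)\subseteq C$ would make $x_1$ redundant, contradicting minimality. If $x_i$ is the omitted vertex, then $N(x_i)\subseteq C$ to cover the edges at $x_i$, and one checks that $C\cap V(G_i)$ is a minimal vertex cover of $G_i$, while conversely $N(x_i)\cup C'$ is a minimal vertex cover of $G$ for every $C'\in\ming(G_i)$. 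Because $N(x_i)$ is disjoint from $V(G_i)$, this yields a bijection $C\leftrightarrow (i,\,C\cap V(G_i))$ under which $C$ corresponds to the monomial $y_i u^i_k$. Hence the list contains each generator of $J(G)$ exactly once, the generators omitting $x_i$ forming exactly the $i$-th block, which I denote $B_i$.

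For linear quotients it suffices to show that for generators $m_s$ (earlier) and $m_t$ (later) there is a variable $x\in m_s\setminus m_t$ and an earlier generator $m_{s'}$ with $m_{s'}\setminus m_t=\{x\}$. If $m_s=y_i u^i_a$ and $m_t=y_i u^i_b$ lie in the same block $B_i$ (so $a<b$), I would transport the witness supplied by the given linear quotients ordering of $J(G_i)$: choosing $c<b$ and a variable $x$ with $u^i_c\setminus u^i_b=\{x\}$ and $x\in u^i_a\setminus u^i_b$, and multiplying through by $y_i$ (which is disjoint from the vertices of $G_i$), the earlier generator $y_i u^i_c$ does the job. The substantive case is when $m_s=y_p u^p_a$ and $m_t=y_q u^q_b$ lie in different blocks; since $m_s$ is earlier we have $p<q$, so $q\geq 2$. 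Here $x_q$ is omitted by $m_t$ but, being distinct from the vertex $x_p$ omitted by $m_s$, lies in $m_s$; thus $x_q\in m_s\setminus m_t$, and I would take $x=x_q$.

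The crux is then to produce an earlier generator $m_{s'}$ with $m_{s'}\setminus m_t=\{x_q\}$. The key observation is that $q\geq 2$ forces $x_q\in N(x_1)=\{x_2,\dots,x_r\}$, so the first block $B_1$ is the natural place to look. I would check that $m_t\cap V(G_1)$ is a vertex cover of $G_1$ (every edge of $G_1$ is an edge of $G$ with both endpoints outside $N[x_1]$, hence covered by a vertex of $m_t\cap V(G_1)$), choose a minimal vertex cover $u^1_c\subseteq m_t\cap V(G_1)$ of $G_1$, and set $m_{s'}=N(x_1)\cup u^1_c = y_1 u^1_c$, a generator of $B_1$ and therefore earlier than $m_t$. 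Since $m_t\cup\{x_q\}$ contains all clique vertices we have $N(x_1)\subseteq m_t\cup\{x_q\}$, and as $u^1_c\subseteq m_t$ we get $m_{s'}\subseteq m_t\cup\{x_q\}$ with $x_q\in m_{s'}$ and $x_q\notin m_t$; hence $m_{s'}\setminus m_t=\{x_q\}$ exactly. I expect this cross-block step to be the main obstacle: the work lies in recognizing that $x_q$ is the correct distinguishing variable and that the first block always supplies a witness, which hinges on $x_1$ being simplicial so that $N(x_1)$ is precisely the rest of the clique.
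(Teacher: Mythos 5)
Your proof is correct and complete. Note that the paper itself does not prove Theorem \ref{thm:vv linear quotients}: it is quoted from \cite{vv} with the remark that it can be extracted from the inductive shellability argument there, so there is no in-paper proof to match step for step. Your three ingredients all check out: the partition of $\ming(G)$ according to the unique omitted vertex of the clique $N[x_1]$ (at least $r-1$ clique vertices must be in any cover, and all $r$ would make $x_1$ redundant) correctly identifies the blocks with the sets $\{N(x_i)\cup C': C'\in \ming(G_i)\}$; the within-block case transports the hypothesis on $J(G_i)$ verbatim since $y_i$ is supported off $V(G_i)$; and in the cross-block case the later generator $m_t=y_qu^q_b$ ($q\geq 2$) omits exactly $x_q$ among the clique vertices, so $x_q$ divides $m_s:m_t$ for every earlier $m_s$ from another block, and block $1$ always supplies a witness because $N(x_1)\setminus\{x_q\}\subseteq m_t$ and $m_t\cap V(G_1)$ contains some minimal cover $u^1_c$ of $G_1$. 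This is exactly the content of the identity \eqref{eqref:fp} that the paper asserts without proof inside Theorem \ref{thm:graded recursive}, namely that the colon ideal of $y_tu^t_\ell$ ($t\geq 2$) against all earlier generators is $(x_t)$ plus the within-block colon, so your argument in fact substantiates that step too. The contrast worth noting is with the paper's own new proof of linear quotients (Theorem \ref{thm: linear quotient of chordal graphs}): there the decomposition is into only two blocks via Lemma \ref{lem: mvc of chordal}, using $G\setminus N[x]$ and $G\setminus\{x\}$, which forces the extra bookkeeping of selecting the generators $B_{i_1},\dots,B_{i_k}$ of $J(G\setminus\{x\})$ not containing $N(x)$ and a more delicate induction to handle colons among the $xB_{i_j}$; your route treats all $r$ neighbors symmetrically and needs no such selection, at the price of requiring linear quotients orderings for all $r$ subgraphs $G_i$ rather than just two.
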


%---------------------------------
% Complete graph
%---------------------------------

\begin{proposition}\label{complete graph}
Let $K_n$ be a complete graph on at least $2$ vertices. Then $J(G)$ has linear quotients with respect to any ordering of its minimal generators and thus has a linear resolution. Moreover, $b_{0,n-1}(J(G))=n$, $b_{1,n}(J(G))=n-1$ and $\pd(J(G))=1$.
\end{proposition}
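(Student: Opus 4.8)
The plan is to make everything explicit by first describing the minimal generators of $J(K_n)$ and then reading off both the linear quotients property and the Betti numbers directly. First I would determine $\ming(K_n)$. Since every pair of distinct vertices of $K_n$ is an edge, a set $C\subseteq V(K_n)$ covers all edges precisely when it omits at most one vertex; the minimal such sets are exactly $C_i=V(K_n)\setminus\{x_i\}$ for $i=1,\dots,n$. Translating into monomials gives $J(K_n)=(m_1,\dots,m_n)$ with $m_i=(x_1\cdots x_n)/x_i$. In particular there are exactly $n$ minimal generators, each of degree $n-1$, which already yields $b_{0,n-1}(J(K_n))=n$ and pins down the degree of the linear strand once linearity is established.

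Next I would compute the colon ideals of these generators. For $i\neq j$ we have $\gcd(m_i,m_j)=(x_1\cdots x_n)/(x_ix_j)$, so $(m_i):(m_j)=(m_i/\gcd(m_i,m_j))=(x_j)$. Using that the colon operation distributes over sums in its first argument, for an arbitrary ordering $m_{\sigma(1)},\dots,m_{\sigma(n)}$ of the generators and any $k\geq 2$ we obtain
$$(m_{\sigma(1)},\dots,m_{\sigma(k-1)}):m_{\sigma(k)}=\sum_{\ell<k}\bigl((m_{\sigma(\ell)}):(m_{\sigma(k)})\bigr)=(x_{\sigma(k)}).$$
This colon is generated by a single variable, so the linear quotients condition holds for \emph{every} ordering of the minimal generators; since all generators share the degree $n-1$, it follows that $J(K_n)$ has a linear resolution.

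Finally, for the Betti numbers I would apply the standard formula expressing the Betti numbers of an ideal with linear quotients in terms of the sizes $r_k$ of the variable sets generating the successive colon ideals, namely $b_i(J(K_n))=\sum_{k=1}^{n}\binom{r_k}{i}$ (see \cite{hh book}). The computation above gives $r_1=0$ and $r_k=1$ for $2\leq k\leq n$, whence $b_0=n$, $b_1=n-1$, and $b_i=0$ for $i\geq 2$; keeping track of the internal degrees along the linear strand places these as $b_{0,n-1}(J(K_n))=n$, $b_{1,n}(J(K_n))=n-1$, and $\pd(J(K_n))=1$. As a self-contained cross-check for $\pd$ and $b_1$, one can instead exhibit the resolution directly: because $x_im_i=x_1\cdots x_n$ is independent of $i$, the elements $x_im_i-x_{i+1}m_{i+1}$ for $i=1,\dots,n-1$ are linear syzygies, and a short divisibility argument (each component of a syzygy $(a_i)$ with $\sum a_im_i=0$ must be divisible by $x_i$) shows they generate the first syzygy module freely and that there are no higher syzygies, giving $0\to S^{n-1}(-n)\to S^n(-(n-1))\to J(K_n)\to 0$.

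I do not anticipate a genuine obstacle here; the argument is essentially a direct computation. The only points requiring mild care are the routine verification of the identity $(m_i):(m_j)=(x_j)$ and the bookkeeping of internal degrees, so that the total Betti numbers $n$ and $n-1$ are correctly recorded in the graded strands $b_{0,n-1}$ and $b_{1,n}$ rather than conflated.
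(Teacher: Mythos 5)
Your proof is correct and follows essentially the same route as the paper: identify the generators $m_i=(x_1\cdots x_n)/x_i$, observe that each colon ideal is generated by a single variable, and read off the Betti numbers from the standard formula for ideals with linear quotients (the paper's Theorem~\ref{thm:betti number of linear quotients}). The paper merely compresses all of this into one sentence, so your version is the same argument with the details written out.
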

\begin{proof}
Observe that $J(G)$ is generated in degree $n-1$ and then the given formulas follow from Theorem \ref{thm:betti number of linear quotients}.
\end{proof}

\begin{lemma}\label{lem: mvc of chordal}
	Let $G$ be a chordal graph with a vertex $v$ such that $N[v]$ induces a clique $K_r$ for some $r\geq 1$. Let $w=N(v)$, $H_1=G\setminus \{v\}$ and $H_2=G\setminus N[v]$. Then
	$$\ming (G)=\{w\cup A: A\in\ming(H_2)\}\cup \{\{v\}\cup B: B\in \ming(H_1) \text{ and } w\nsubseteq B\}.$$
\end{lemma}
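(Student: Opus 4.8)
The plan is to prove the claimed set equality by splitting every minimal vertex cover of $G$ according to whether or not it contains $v$, and then matching these two families with the two sets on the right-hand side. Throughout I would use the standard criterion that a vertex cover $C$ is \emph{minimal} exactly when $C\setminus\{c\}$ fails to be a vertex cover for every $c\in C$ (if a proper subset $C'$ were a cover, then for $c\in C\setminus C'$ the set $C\setminus\{c\}\supseteq C'$ would still cover), which lets me test minimality one vertex at a time. I would also repeatedly exploit the fact that the edges of $G$ fall into two groups: those incident to a vertex of $N[v]$, and those lying entirely in $V(G)\setminus N[v]$, the latter being precisely the edges of $H_2$.

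First I would treat the covers $C$ with $v\notin C$. Covering each edge $\{v,u\}$ with $u\in w=N(v)$ forces $w\subseteq C$, and then $A:=C\setminus w$ is a subset of $V(H_2)$. Since the vertices of $w$ already cover every edge meeting $N[v]$, the set $C$ covers $G$ iff $A$ covers all edges of $H_2$. The minimality analysis then runs in both directions: if $C=w\cup A$ is minimal, then a strictly smaller cover $A'$ of $H_2$ would produce the strictly smaller cover $w\cup A'$ of $G$, so $A\in\ming(H_2)$; conversely, if $A\in\ming(H_2)$, then no $u\in w$ may be deleted (the edge $\{v,u\}$ would be uncovered) and no $a\in A$ may be deleted (minimality of $A$ in $H_2$ exhibits an edge of $H_2$, hence of $G$, left uncovered). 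This identifies this family exactly with $\{w\cup A : A\in\ming(H_2)\}$.

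Next I would treat the covers $C$ with $v\in C$, writing $C=\{v\}\cup B$ where $B\subseteq V(H_1)$. As $v$ covers every edge incident to $v$, the set $C$ covers $G$ iff $B$ covers all edges of $H_1$; and for fixed $b\in B$, the set $C\setminus\{b\}$ covers $G$ iff $B\setminus\{b\}$ covers $H_1$, so $C\setminus\{b\}$ fails for every $b$ exactly when $B\in\ming(H_1)$. The remaining minimality condition is that $C\setminus\{v\}=B$ must fail to cover $G$: since $B$ already covers $H_1$, it fails to cover $G$ precisely when some edge $\{v,u\}$ is left uncovered, i.e.\ when $w\nsubseteq B$. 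Combining these, $C=\{v\}\cup B$ is a minimal vertex cover iff $B\in\ming(H_1)$ and $w\nsubseteq B$, which matches the second set.

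Since the cases $v\notin C$ and $v\in C$ are exhaustive and mutually exclusive, taking the union of the two descriptions yields the stated formula. The step I expect to require the most care is the biconditional for minimality in each case, in particular correctly isolating the condition $w\nsubseteq B$ as exactly the obstruction to deleting $v$, and ensuring in the first case that the decomposition $C=w\cup A$ places $A$ inside $V(H_2)$ so that minimal covers of $H_2$ correspond to minimal covers of $G$. Note that the argument uses only that $w$ covers every edge touching $N[v]$, so the clique hypothesis on $N[v]$ is available but not essential to this particular computation.
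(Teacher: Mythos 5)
Your proof is correct and rests on the same decomposition as the paper's: split the minimal vertex covers of $G$ according to whether they contain $v$ and match the two resulting families with the two sets on the right-hand side. The difference lies in how the key containments are obtained. The paper leans on the clique hypothesis: since $N[v]$ induces $K_r$, a minimal cover must contain exactly $r-1$ vertices of $K_r$, which forces $w\subseteq u$ when $v\notin u$ and $w\nsubseteq u$ when $v\in u$. You instead get $w\subseteq C$ directly from the need to cover the edges $\{v,u\}$, $u\in N(v)$, without using $v$, and $w\nsubseteq B$ from the observation that otherwise $v$ would be redundant; as you note, this makes the clique hypothesis (and hence chordality) irrelevant, so your argument establishes the identity for an arbitrary vertex of an arbitrary graph. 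You also carry out the minimality verifications in both inclusions explicitly, whereas the paper only sketches the reverse inclusion (``one can show that\dots''), and your uniform treatment absorbs the degenerate cases $r=1$ that the paper disposes of separately via the convention $\ming(H)=\{\emptyset\}$. What each approach buys: the paper's clique counting is a one-line way to see that exactly $r-1$ vertices of $N[v]$ appear in any minimal cover, which fits the chordal setting of the rest of the paper, while your version is marginally longer but cleaner, fully detailed, and strictly more general.
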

	\begin{proof} First observe that the equality holds if $v$ has no neighbors or $v$ is the only vertex of $G$ as we set $\mathcal{M}(H)=\{\emptyset\}$ for a graph $H$ with no edges. So, let us assume that $r\geq 2$.
		
		($\subseteq$): Let $u$ be a minimal vertex cover of $G$. First observe that since $K_r$ is a complete subgraph, $u$ contains at least $r-1$ vertices of $K_r$. On the other hand, $u$ cannot contain all of the vertices of $K_r$ as it would make $v$ redundant in the cover. Therefore, $u$ contains exactly $r-1$ vertices of $K_r$. We consider cases:
		
		Case 1: Suppose $v\in u$. Then $w\nsubseteq u$ as $u$ cannot contain all the vertices of $K_r$. Then $u\setminus \{v\}$ is a minimal vertex cover of $H_1$.
		
		Case 2: Suppose $v\notin u$. Then $w\subseteq u$. Observe that $u\setminus w$ must be a minimal vertex cover of $H_2$.
		
		($\supseteq$): One can show that every minimal vertex cover of $H_2$ can be extended to that of $G$ by adding the neighbors of $v$. Similarly, every minimal vertex cover $B$ of $H_1$ can be extended to that of $G$ by adding $v$ provided $w$ is not contained in $B$. 
	\end{proof}
	Francisco and Van Tuyl \cite{francisco van tuyl} proved that the cover ideal $J(G)$ of a chordal graph $G$ is componentwise linear. Their proof is based on showing that the ideal $(J(G)_d)$ generated by all degree $d$ elements of $J(G)$ has linear quotients for all $d$. We improve their arguments by dealing with the minimal generators of $J(G)$ instead of those of $(J(G)_d)$ and recover the following result.
	 
	\begin{theorem}\cite{vv}\label{thm: linear quotient of chordal graphs}
		If $G$ is a chordal graph, then $J(G)$ has linear quotients. 
	\end{theorem}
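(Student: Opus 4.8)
The plan is to prove that a chordal graph $G$ has $J(G)$ with linear quotients by induction, using the simplicial vertex supplied by Dirac's theorem together with the decomposition of minimal vertex covers established in Lemma~\ref{lem: mvc of chordal}. Since a chordal graph always admits a simplicial elimination ordering, I would fix a simplicial vertex $v$ so that $N[v]$ induces a clique $K_r$. The base case is when $G$ has no edges (so $J(G)=(1)$ or is handled trivially) or when $G$ is complete, which is Proposition~\ref{complete graph}. For the inductive step, both $H_1 = G\setminus\{v\}$ and $H_2 = G\setminus N[v]$ are again chordal (induced subgraphs of chordal graphs are chordal), and both have strictly fewer vertices, so by induction $J(H_1)$ and $J(H_2)$ have linear quotients; let me fix linear quotients orderings for each.

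The key structural input is Lemma~\ref{lem: mvc of chordal}, which writes every minimal vertex cover of $G$ as either $w \cup A$ with $A \in \ming(H_2)$ (translating to the generator $y\, a$ of $J(G)$, where $y$ is the monomial on $N(v)=w$ and $a$ corresponds to $A$) or $\{v\} \cup B$ with $B \in \ming(H_1)$ and $w \not\subseteq B$ (translating to $x_v\, b$). The plan is to build an explicit ordering of the generators of $J(G)$ by first listing all generators of the first type, $y\, a_1, \dots, y\, a_{s_2}$, following the fixed linear quotients ordering of $J(H_2)$, and then listing the generators of the second type, $x_v\, b_1, \dots$, following the fixed linear quotients ordering of $J(H_1)$ (restricted to those $B$ with $w \not\subseteq B$). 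This mirrors the ordering appearing in Theorem~\ref{thm:vv linear quotients}, but built from a single simplicial vertex rather than the full closed-neighborhood recursion, which is the improvement over Francisco--Van Tuyl alluded to in the text.

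The verification then splits into three colon-ideal computations. First, for two generators $y\,a_i$ and $y\,a_j$ with $i<j$ both of the first type: the common factor $y$ cancels, so $(\,\cdots, y a_i, \cdots) : y a_j$ reduces to the colon ideal for the $H_2$-ordering, which is generated by variables by the inductive hypothesis. Second, for two generators $x_v\, b_i$ and $x_v\, b_j$ of the second type, the factor $x_v$ cancels and one reduces similarly to the $H_1$-ordering; here one must check that the restriction to covers with $w \not\subseteq B$ does not spoil the linear quotients property, which should follow because removing generators from a linear quotients ordering preserves the property. The genuinely new case, and the one I expect to be the main obstacle, is the mixed case: showing that for a later generator $x_v\, b_j$ of the second type, the colon ideal $(\text{all earlier generators}) : x_v\, b_j$ is generated by variables, where the earlier generators include all of the first type $y\, a_i$. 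One must produce, for each variable dividing some $y\,a_i / \gcd(y a_i, x_v b_j)$, an earlier generator witnessing linearity; the natural candidate is a generator of the second type (to exploit the shared $x_v$) whose colon with $x_v b_j$ is a single variable, and the key combinatorial fact to exploit is that any $a \in \ming(H_2)$ extends to $w \cup a \in \ming(G)$ while any neighbor $x_k \in w$ not in $B$ gives a variable that must already be accounted for. Carefully matching these witnesses using the clique structure of $N[v]$ is where the argument requires the most care, and I would handle it by a direct case analysis on whether the offending variable lies in $N(v)$ or in $V(H_2)$.
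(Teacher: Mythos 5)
Your overall strategy---induction on the number of vertices using a simplicial vertex $v$, Lemma~\ref{lem: mvc of chordal}, and the ordering that lists the generators $yA_1,\dots ,yA_a$ coming from $\ming(H_2)$ first and then the generators $x_vB_{i_1},\dots ,x_vB_{i_k}$ coming from the minimal vertex covers of $H_1$ not containing $N(v)$---is exactly the paper's. But there is a genuine gap in your second verification step. The claim that ``removing generators from a linear quotients ordering preserves the property'' is false: $ab,\,bc,\,cd$ is a linear quotients ordering of $(ab,bc,cd)$, yet the subsequence $ab,\,cd$ is not a linear quotients ordering of $(ab,cd)$, since $(ab):cd=(ab)$. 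So the restricted subsequence $B_{i_1},\dots ,B_{i_k}$ need not by itself be a linear quotients ordering, and the colon ideal $(x_vB_{i_1},\dots ,x_vB_{i_{r-1}}):x_vB_{i_r}$ cannot be dispatched by cancelling $x_v$ and citing the inductive hypothesis.

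You have also inverted which case is delicate. The mixed case is the easy one: since $N(v)$ induces a clique in $H_1$, every minimal vertex cover of $H_1$ contains all but at most one neighbor of $v$, so each $B_{i_r}$ misses exactly one neighbor $y_q$; moreover $B_{i_r}\setminus N(v)$ is a vertex cover of $H_2$ and hence contains some $A_j$, which gives $yA_j:x_vB_{i_r}=(y_q)$, while every other $yA_{j'}:x_vB_{i_r}$ is divisible by $y_q$. Hence the colon of the entire first block with $x_vB_{i_r}$ is just $(y_q)$. That single variable is precisely what repairs the possibly broken subsequence in the second block: if $s<r$ and $|B_{i_s}\setminus B_{i_r}|\geq 2$, the inductive hypothesis applied to the \emph{full} $H_1$-ordering produces a witness $B_\ell$ with $\ell<i_s$ and $B_\ell\setminus B_{i_r}=\{z\}$ for some $z\in B_{i_s}\setminus B_{i_r}$; if $B_\ell$ was discarded from your list, then $N(v)\subseteq B_\ell$, which forces $z=y_q$, already present in the colon ideal thanks to the first block. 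Without this interplay between the two blocks the argument does not close, so as written the proposal has a real hole at exactly the point the paper's proof does its work.
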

	\begin{proof}
		We proceed by induction on the number of vertices of $G$. We may assume that $G$ is not complete since otherwise the result follows from Proposition \ref{complete graph}. Let $x$ be a vertex of $G$ with $N(x)=\{y_1,\dots ,y_t\}$ such that the induced subgraph on $N[x]$ is complete. Let $H_1=G\setminus \{x\}$ and $H_2=G\setminus N[x]$.
		
		Let $A_1, \dots , A_a$ be the minimal generators of $J(H_2)$ and let $B_1, \dots , B_b$ be the minimal generators of $J(H_1)$ both written in the linear quotients ordering.
		
		Let $B_{i_1}, \dots , B_{i_k}$ be the minimal vertex covers of $H_1$ which do not contain $N(x)$ where $i_1<\dots <i_k$. By Lemma~\ref{lem: mvc of chordal} 
		$$ yA_1, \dots , yA_a, xB_{i_1}, \dots , xB_{i_k}$$
		are the minimal generators of $J(G)$ where $y=y_1\dots y_t$. We claim that the list above is a linear quotients ordering. By induction assumption $(yA_1,\dots , yA_{i-1}):(yA_i)$ is generated by variables for $i=2,\dots ,a$. Without loss of generality, suppose that $y_t \notin B_{i_1}$. First, we claim that 
		$$ (yA_1, \dots , yA_a): xB_{i_1} =(y_t).$$
		Observe that $B_{i_1}\setminus \{y_1,\dots , y_{t-1}\}$ is a vertex cover of $H_2$. Then $A_j\subseteq B_{i_1}\setminus \{y_1,\dots , y_{t-1}\}$ for some $j$ and we get $yA_j\setminus xB_{i_1}= \{y_t\}$.
		
		Let $2\leq r \leq k$ be fixed. We will show that $(yA_1, \dots , yA_a, xB_{i_1}, \dots , xB_{i_{r-1}}):xB_{i_r}$ is generated by variables. Suppose that $y_q\notin B_{i_r}$ for some $1\leq q \leq t$. From the argument above we get 
		$$(yA_1, \dots , yA_a, xB_{i_1}, \dots , xB_{i_{r-1}}):xB_{i_r}= (y_q) + (xB_{i_1}, \dots , xB_{i_{r-1}}):xB_{i_r}.$$
		
		Let $1\leq s \leq r-1$ be fixed. Suppose that $B_{i_s}\setminus B_{i_r}$ has at least two vertices. By induction assumption, there exists $1 \leq \ell < i_s$ such that $B_{\ell}\setminus B_{i_r}=\{z\}$ for some $z\in B_{i_s}\setminus B_{i_r}$. If $\ell \in \{i_1,\dots , i_{s-1}\}$, then $z$ is a generator of $(xB_{i_1}, \dots , xB_{i_{r-1}}):xB_{i_r}$. Let us assume that $\ell \notin \{i_1,\dots , i_{s-1}\}$. Then $\{y_1,\dots , y_t\}\subseteq B_{\ell}$ and $z=y_q$.
	\end{proof}
	
	The inductive proof of the theorem above describes how to construct linear quotients ordering recursively using subgraphs.
	
\begin{cor}\label{cor:my order}
	Suppose $G$ is a chordal graph and $x$ is a vertex of $G$ such that $N[x]$ induces a clique on at least $2$ vertices. Let $A_1, \dots , A_a$ be the minimal generators of $J(G\setminus N[x])$ and let $B_1, \dots , B_b$ be the minimal generators of $J(G\setminus \{x\})$ both written in the linear quotients ordering. Let $B_{i_1}, \dots , B_{i_k}$ be the generators which do not contain $N(x)$ where $i_1<\dots <i_k$. Then 
	$$ N(x)A_1, \dots , N(x)A_a, xB_{i_1}, \dots , xB_{i_k}   $$
	is a linear quotients ordering for $J(G)$.
\end{cor}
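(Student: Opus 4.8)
The plan is to recognize that this Corollary is, in essence, a verbatim restatement of the inductive step already carried out inside the proof of Theorem~\ref{thm: linear quotient of chordal graphs}, so the bulk of the work has been done. First I would set up notation to match the two statements: since $N[x]$ induces a clique on $r\geq 2$ vertices, write $N(x)=\{y_1,\dots ,y_t\}$ with $t=r-1\geq 1$ and $y=y_1\cdots y_t=N(x)$. Applying Lemma~\ref{lem: mvc of chordal} with $v=x$, $H_1=G\setminus\{x\}$ and $H_2=G\setminus N[x]$ identifies the minimal generators of $J(G)$ as exactly $yA_1,\dots ,yA_a$ together with those $xB_{i_j}$ for which $B_{i_j}$ does not contain $N(x)$. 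This is precisely the list in the statement, so the entire content of the Corollary is the assertion that this particular order is a linear quotients ordering.

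The key step is then to transfer the colon-ideal verification from the proof of Theorem~\ref{thm: linear quotient of chordal graphs}. I would first note that $G\setminus N[x]$ and $G\setminus\{x\}$ are induced subgraphs of a chordal graph, hence chordal, so by Theorem~\ref{thm: linear quotient of chordal graphs} their cover ideals genuinely admit the linear quotients orderings $A_1,\dots ,A_a$ and $B_1,\dots ,B_b$ assumed in the hypothesis. The crucial observation is that the three computations in that proof---that $(yA_1,\dots ,yA_{i-1}):yA_i$ is generated by variables, that $(yA_1,\dots ,yA_a):xB_{i_1}=(y_t)$ after relabelling so that $y_t\notin B_{i_1}$, and that each subsequent colon $(yA_1,\dots ,yA_a,xB_{i_1},\dots ,xB_{i_{r-1}}):xB_{i_r}$ is generated by variables---use nothing about $x$ beyond the fact that $N[x]$ induces a complete subgraph. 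Since that is exactly the hypothesis here, the same verification applies word for word and shows that the displayed order is a linear quotients ordering for $J(G)$.

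The only point requiring separate attention, and the one I would flag as the main (though minor) obstacle, is that the proof of Theorem~\ref{thm: linear quotient of chordal graphs} reduced ``without loss of generality'' to the case where $G$ is not complete and $x$ is a simplicial vertex, whereas the Corollary permits $G$ to be complete and $x$ to be any vertex with $N[x]$ a clique. If $G=K_n$ then $N[x]=V(G)$, so $G\setminus N[x]$ is the empty graph, $J(G\setminus N[x])=(1)$ with single generator $A_1=\emptyset$, and the list collapses to $N(x),xB_1,\dots ,xB_b$; here Proposition~\ref{complete graph} guarantees that \emph{every} ordering of the generators of $J(G)$ is a linear quotients ordering, so the claim holds trivially. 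For $G$ not complete the argument above applies directly, which completes the proof.
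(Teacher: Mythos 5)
Your proposal is correct and takes essentially the same route as the paper, which gives no separate proof of Corollary~\ref{cor:my order} and simply notes that it is the content of the inductive step in the proof of Theorem~\ref{thm: linear quotient of chordal graphs}. Your explicit check that the colon-ideal computations depend only on $N[x]$ inducing a clique and on $A_1,\dots,A_a$ and $B_1,\dots,B_b$ being arbitrary linear quotients orderings, together with the separate treatment of the complete-graph case via Proposition~\ref{complete graph}, faithfully fills in what the paper leaves implicit.
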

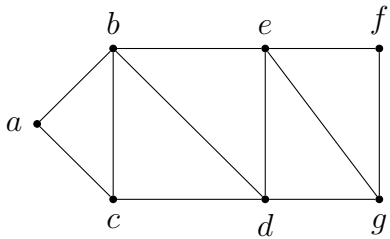
\begin{figure}[hbt!]
 \centering
\begin{tikzpicture}
[scale=1.00, vertices/.style={draw, fill=black, circle, minimum size = 4pt, inner sep=0.5pt}, another/.style={draw, fill=black, circle, minimum size = 2.5pt, inner sep=0.1pt}]
						\node[another, label=left:{$a$}] (a) at (-2,0) {};
						\node[another, label=above:{$b$}] (b) at (-1, 1) {};
						\node[another, label=below:{$c$}] (c) at (-1, -1) {};
						\node[another, label=below:{$d$}] (d) at (1,-1) {};
						\node[another, label=above:{$e$}] (e) at (1,1) {};
						\node[another, label=above:{$f$}] (f) at (2.5, 1) {};
				           \node[another, label=below:{$g$}] (g) at (2.5, -1) {};
						\foreach \to/\from in {a/b, a/c, b/c, b/d, b/e, c/d, d/e, d/g, e/f, e/g, f/g}
						\draw [-] (\to)--(\from);
						\end{tikzpicture}
						 \caption{\label{graph} A chordal graph}
\end{figure}
\begin{example}[\textbf{Comparison of shellings}]
	Let $G$ be the graph in Figure \ref{graph}. Then $N[a]$ induces a clique on $3$ vertices. Then $J(G\setminus N[a])=(eg,fdg,fed)$, $J(G\setminus N[b])=(f,g)$ and $J(G\setminus N[c])=(ef,fg,ge)$ where in each case the generators are listed in a linear quotients ordering. According to Theorem \ref{thm:vv linear quotients} if we take the neighbors of $a$ in the order $b, c$ we get the linear quotients ordering
	$$\bm{bc}eg, \bm{bc}fdg, \bm{bc}fed, \bm{aced}f, \bm{aced}g, \bm{abd}ef, \bm{abd}fg, \bm{abd}ge $$
	of $J(G)$. And, this gives the shelling $adf, ae, ag, bg, bf, cg, ce, cf $ of $\ind(G)$. Similarly, if we take the neighbors of $a$ in the order $c, b$ we get the linear quotients ordering
	$$\bm{bc}eg, \bm{bc}fdg, \bm{bc}fed, \bm{abd}ef, \bm{abd}fg, \bm{abd}ge, \bm{aced}f, \bm{aced}g $$
of $J(G)$ which gives the shelling $adf, ae, ag, cg, ce, cf, bg, bf$ of $\ind(G)$.

 Now let us construct a linear quotients ordering using Corollary \ref{cor:my order}. Observe that $J(G\setminus\{a\})=(bceg, cdeg,bdeg,bdef,cdef,bdgf)$ with generators listed in a linear quotients ordering. The generator $bceg$ is the only minimal vertex cover of $G\setminus \{a\}$ that contains $N(a)$. Therefore the list
$$\bm{bc}eg, \bm{bc}fdg, \bm{bc}fed, \bm{a}cdeg, \bm{a}bdeg, \bm{a}bdef, \bm{a}cdef, \bm{a}bdgf $$
is a linear quotients ordering for $J(G)$ which corresponds to the shelling $adf$, $ae$, $ag$, $bf$, $cf$, $cg$, $bg$, $ce$ of $\ind(G)$.
\end{example}
\section{Recursive computation of Betti numbers}
In this section we will find some recursive formulas for Betti numbers of cover ideals of chordal graphs. We will need the following theorem which describes Betti numbers of ideals with linear quotients.
%------------------------------------------------------
%       graded betti numbers
%------------------------------------------------------
% complete oldugu durum OK? check it
%  The given formula does not hold for S/I

\begin{theorem}\cite[Corollary 2.7]{sv}\label{thm:betti number of linear quotients}
Let $I$ be a homogeneous ideal with linear quotients with respect to $f_1,\dots ,f_m$ where $\{f_1,\dots ,f_m\}$ is a minimal system of homogeneous generators for $I$. Let $n_p$ be the minimal number of homogeneous generators of $(f_1,\dots ,f_{p-1}):f_p$ for $p=1,\dots ,m$. Then
$$ b_{i, i+j}(I)=\sum_{\substack{1\leq p \leq m\\ \sdeg(f_p)=j}} {n_p \choose i}$$

$$ b_i(I)=\sum_{p=1}^m {n_p \choose i}.$$
\end{theorem}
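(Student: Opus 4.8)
The plan is to resolve $I$ by an iterated mapping cone and to read off the Betti numbers from the long exact sequence of $\operatorname{Tor}$. Set $I_p=(f_1,\dots,f_p)$ for $p=0,\dots,m$, so $I_0=(0)$ and $I_m=I$, and write $d_p=\sdeg(f_p)$. For each $p\geq 1$ there is a short exact sequence of graded modules
$$0\longrightarrow \bigl(S/(I_{p-1}:f_p)\bigr)(-d_p)\xrightarrow{\ \cdot f_p\ } S/I_{p-1}\longrightarrow S/I_p\longrightarrow 0.$$
The linear quotients hypothesis says exactly that $(I_{p-1}:f_p)$ is generated by $n_p$ of the variables, whence $S/(I_{p-1}:f_p)$ is resolved by the Koszul complex on those variables; consequently $\operatorname{Tor}_k\bigl((S/(I_{p-1}:f_p))(-d_p),\Bbbk\bigr)$ has dimension $\binom{n_p}{k}$ and is concentrated in internal degree $k+d_p$.

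Next I would feed the short exact sequence into the long exact sequence in $\operatorname{Tor}_\bullet(-,\Bbbk)$. The claim to establish is that the maps induced by multiplication by $f_p$ vanish, so that the long exact sequence breaks into short exact sequences
$$0\longrightarrow \operatorname{Tor}_k(S/I_{p-1},\Bbbk)\longrightarrow \operatorname{Tor}_k(S/I_p,\Bbbk)\longrightarrow \operatorname{Tor}_{k-1}\bigl((S/(I_{p-1}:f_p))(-d_p),\Bbbk\bigr)\longrightarrow 0$$
which necessarily split as graded vector spaces. Iterating from $p=1$ to $p=m$ then gives a direct sum decomposition of $\operatorname{Tor}_\bullet(S/I,\Bbbk)$ into the contributions of the individual Koszul towers.

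The step I expect to be the main obstacle is the vanishing of the induced map. The map $(S/(I_{p-1}:f_p))(-d_p)\to S/I_{p-1}$ is multiplication by $f_p$, and since $I$ is a proper homogeneous ideal we have $f_p\in\mathfrak{m}=(x_1,\dots,x_n)$. Multiplication by an element of $\mathfrak{m}$ can be lifted to a comparison map of minimal free resolutions all of whose entries lie in $\mathfrak{m}$, so tensoring with $\Bbbk$ annihilates it and the induced map on $\operatorname{Tor}_\bullet(-,\Bbbk)$ is zero. (Equivalently, this is precisely the assertion that the iterated mapping cone is already a minimal free resolution of $S/I$, with no cancellation.)

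Granting the splitting, the bookkeeping is routine. Using the isomorphism $\operatorname{Tor}_{k}(S/I,\Bbbk)\cong\operatorname{Tor}_{k-1}(I,\Bbbk)$, the Koszul tower attached to $f_p$ contributes $\binom{n_p}{i}$ to $\operatorname{Tor}_i(I,\Bbbk)$ in internal degree $i+d_p$. Collecting the contributions over all $p$ in homological degree $i$ and internal degree $i+j$ gives
$$b_{i,i+j}(I)=\sum_{\substack{1\leq p\leq m\\ \sdeg(f_p)=j}}\binom{n_p}{i},$$
and summing over $j$ yields $b_i(I)=\sum_{p=1}^m\binom{n_p}{i}$, as claimed.
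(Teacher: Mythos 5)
The paper does not actually prove this statement---it is quoted from \cite{sv} (Corollary 2.7)---so there is no in-paper proof to compare against; I can only assess your argument on its own terms. Your overall strategy (iterated mapping cones over the short exact sequences $0\to (S/(I_{p-1}:f_p))(-d_p)\to S/I_{p-1}\to S/I_p\to 0$, with the Koszul complex resolving $S/(I_{p-1}:f_p)$) is the standard and correct route, and your final bookkeeping is right. But the step you yourself flag as the main obstacle is resolved by a false principle, so the proof has a genuine gap.

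The claim that ``multiplication by an element of $\mathfrak{m}$ can be lifted to a comparison map of minimal free resolutions all of whose entries lie in $\mathfrak{m}$'' is not true: only the degree-zero component of the comparison map is literally multiplication by $f_p$; the higher components are produced by the lifting process and can contain unit entries. For a counterexample take $S=\Bbbk[x]$ and the injection $\cdot x\colon (S/(x))(-1)\to S/(x^2)$: the lift of the comparison map in homological degree $1$ is the identity $S(-2)\to S(-2)$, and the induced map on $\operatorname{Tor}_1(-,\Bbbk)$ is an isomorphism, not zero. So the vanishing you need does not follow from $f_p\in\mathfrak{m}$ alone. Relatedly, your argument never uses the hypothesis that $\{f_1,\dots,f_m\}$ is a \emph{minimal} generating system, which is essential: already for the map on $\operatorname{Tor}_1$, one must show that $x_jf_p$ lies in $\mathfrak{m}I_{p-1}$ for each variable generator $x_j$ of $I_{p-1}:f_p$, and this is exactly where linear independence of the $f_i$ modulo $\mathfrak{m}I$ enters. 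For the maps on $\operatorname{Tor}_k$ with $k\ge 2$ one further needs degree bookkeeping: the source $\operatorname{Tor}_k((S/(I_{p-1}:f_p))(-d_p),\Bbbk)$ sits in internal degree $k+d_p$, while inductively the target $\operatorname{Tor}_k(S/I_{p-1},\Bbbk)$ sits in degrees $k-1+d_i$ with $i<p$; the potentially dangerous case $d_i=d_p+1$ is precisely where \cite{sv} has to do real work, and your proposal does not address it. To repair the proof you would need to supply these arguments (or reduce to the equigenerated case, where the degree argument is immediate) rather than appeal to the general lifting claim.
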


\begin{theorem}[\textbf{Recursive formula for graded Betti numbers}]\label{thm:graded recursive}
Let $G$ be a chordal graph with a vertex $x_1$ such that $N[x_1]=\{x_1,\dots, x_r\}$ induces a clique. Let $H_i$ be the graph obtained from $G$ by removing the closed neighborhood of $x_i$. Then for all $i\geq 1$
$$  b_{i,i+j}(J(G))= \sum_{t=1}^{r} b_{i, i+j-|N(x_t)|}(J(H_t)) + \sum_{t=2}^{r}b_{i-1,i+j-|N(x_t)|}(J(H_t)) $$
where if $H$ has no edges we set $b_{0,1}(J(H))=1$ and $b_{p,q}(J(H))=0$ when $(p,q)\neq (0,1)$.
\end{theorem}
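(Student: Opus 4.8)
The plan is to compute both sides from the explicit linear quotients ordering of Theorem \ref{thm:vv linear quotients} and then feed the result into the Sharifan--Varbaro formula of Theorem \ref{thm:betti number of linear quotients}. Write the ordering blockwise as $y_t u^t_1,\dots,y_t u^t_{s_t}$ for $t=1,\dots,r$, where $y_t=\prod_{x_j\in N(x_t)}x_j$ and $u^t_1,\dots,u^t_{s_t}$ is a linear quotients ordering of $J(H_t)$. For a generator $g=y_t u^t_p$, let $n_g$ be the number of variables generating the colon of $g$ against all earlier generators, and let $m^t_p$ be the number of variables generating $(u^t_1,\dots,u^t_{p-1}):u^t_p$ inside $J(H_t)$. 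By Theorem \ref{thm:betti number of linear quotients}, $b_{i,i+j}(J(G))=\sum_{g:\,\deg g=j}\binom{n_g}{i}$ and $\deg g=|N(x_t)|+\deg u^t_p$, so the whole computation reduces to relating $n_g$ to $m^t_p$.

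First I would split the colon of $g$ into a within-block and a cross-block part. For the within-block part, the generators $y_t u^t_1,\dots,y_t u^t_p$ all share the factor $y_t$, whose support $N(x_t)$ is disjoint from $V(H_t)$; hence $(y_t u^t_1,\dots,y_t u^t_{p-1}):y_t u^t_p=(u^t_1,\dots,u^t_{p-1}):u^t_p$, contributing exactly the $m^t_p$ variables of $J(H_t)$, all of which lie in $V(H_t)$. The crucial point is the cross-block part. Here I would observe that $x_t$ divides every earlier generator but not $g$: for $s<t$ the vertices $x_s,x_t$ both lie in the clique $N[x_1]=\{x_1,\dots,x_r\}$, so $x_t\in N(x_s)$ and therefore $x_t\mid y_s$, whereas $x_t\notin N(x_t)\cup V(H_t)$ gives $x_t\nmid g$. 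Thus $x_t$ divides $h/\gcd(h,g)$ for every earlier generator $h$, so each individual cross-block colon is contained in $(x_t)$, while $x_t$ itself lies in the colon; the cross-block part therefore collapses to the single variable $(x_t)$. Since $x_t\notin V(H_t)$, this variable is genuinely new, and I obtain the clean count $n_g=m^t_p+1$ for $t\geq 2$ and $n_g=m^1_p$ for $t=1$.

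To finish, I would substitute into the Sharifan--Varbaro sum and regroup by blocks, using the factor $y_t$ to account for the degree shift by $|N(x_t)|$. The $t=1$ block reproduces $b_{i,i+j-|N(x_1)|}(J(H_1))$ directly. For $t\geq 2$ I would expand $\binom{m^t_p+1}{i}=\binom{m^t_p}{i}+\binom{m^t_p}{i-1}$ by Pascal's rule; summing the first binomials over the generators of $J(H_t)$ of degree $j-|N(x_t)|$ again yields a term of the first sum, while summing the second binomials yields the corresponding Betti number of $J(H_t)$ one homological degree lower, producing the second sum. The base case is the edgeless $H_t$, where $J(H_t)=(1)$ is a single generator with trivial colon, which is exactly what the stated convention records.

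The step I expect to be the main obstacle is the cross-block collapse. A priori the colon of $g$ against the union of all previous blocks could need many variables, and the entire content of the argument is that clique adjacency of $x_t$ to each earlier $x_s$ forces $x_t$ to divide every earlier generator; this single fact both puts $x_t$ into the colon and confines every cross-block colon inside $(x_t)$. Checking that this new variable is disjoint from the within-block variables, which live on $V(H_t)$, is what upgrades the count to the exact equality $n_g=m^t_p+1$ and makes the binomial identity deliver the formula rather than only a bound.
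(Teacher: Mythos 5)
Your proposal follows the paper's own route: the same blockwise linear quotients ordering from Theorem \ref{thm:vv linear quotients}, the same reduction to the count of Theorem \ref{thm:betti number of linear quotients}, and the same Pascal-rule regrouping. However, there is a genuine gap at precisely the step you single out as crucial, namely the claim that $x_t$ \emph{lies in} the colon ideal $(f_1,\dots,f_{p-1}):f_p$ when $f_p=y_tu^t_p$ with $t\geq 2$. The fact that $x_t$ divides every generator $h$ from an earlier block (while $x_t\nmid f_p$) gives only the containment $(h):f_p\subseteq(x_t)$; that is an \emph{upper} bound on the cross-block part of the colon, and it does not ``put $x_t$ into the colon'' as you assert. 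Membership of $x_t$ requires exhibiting an earlier generator $h$ with $h\mid x_tf_p$, equivalently with $h/\gcd(h,f_p)$ equal to $x_t$ on the nose, and nothing in your argument produces such an $h$. This is not cosmetic: without it you only know $m^t_p\leq n_g\leq m^t_p+1$, and if $n_g=m^t_p$ the entire second sum in the statement would disappear. Invoking the linear quotients property from Theorem \ref{thm:vv linear quotients} does not close the gap either, since a cross-block colon generator of the form $x_tz$ with $z\in V(H_t)$ could be absorbed by a within-block variable $z$ without $x_t$ ever being a minimal generator of the colon.

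The missing ingredient is the observation the paper uses at this point (in the spirit of Lemma \ref{lem: mvc of chordal}): since $N[x_1]$ is a clique containing $x_t$, one has $N[x_1]\subseteq N[x_t]$, so any minimal vertex cover $u^t_p$ of $H_t$ extends to a minimal vertex cover $B$ of $H_1$ by adding only vertices of $N(x_t)\setminus N[x_1]$. The corresponding block-one generator $y_1B$ then satisfies $y_1B\subseteq N[x_t]\cup u^t_p$, i.e., $y_1B$ divides $x_tf_p$; since distinct minimal generators of $J(G)$ are incomparable, $y_1B/\gcd(y_1B,f_p)$ cannot be $1$ and must therefore equal $x_t$, which gives $x_t\in(f_1,\dots,f_{p-1}):f_p$. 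With that single supplement your identity $n_g=m^t_p+1$ for $t\geq 2$ (and $n_g=m^1_p$ for $t=1$) is justified, and the rest of your derivation, including the degree bookkeeping and the edgeless base case, goes through as written.
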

\begin{proof}
	
Suppose that for each $t$, the ideal $J(H_t)$ has linear quotients with respect to the order $u^t_1,\dots ,u^t_{s_t}$ of its minimal generators. Then 
	 $$f_1,f_2,\dots ,f_q:=y_1 u^1_1 ,\dots ,y_1 u^1_{s_1}; y_2 u^2_1 ,\dots ,y_2 u^2_{s_2};\cdots ; y_r u^r_1 ,\dots ,y_r u^r_{s_r} $$
	 is a linear quotients ordering for $J(G)$ by Theorem~\ref{thm:vv linear quotients}. Clearly, for all $1\leq p\leq s_1$
	 \begin{equation}\label{eq:f1}
	(f_1,\dots,f_{p-1}):f_p=(u_1^1,\dots , u^1_{p-1}):u_{p-1}^1.
	 \end{equation}
	 Also, for every $t\geq 2$, any minimal vertex cover of $H_t$ can be extended to that of $H_1$ by adding some neighbors of $x_t$ since $N[x_1]\subseteq N[x_t]$. Therefore
	 if $f_p=y_tu_{\ell}^t$ for some $t\geq 2$ and $1\leq \ell\leq s_t$, then we have
	 \begin{equation}\label{eqref:fp}
	 (f_1,\dots ,f_{p-1}):f_p=(x_t)+(u_1^t, u_2^t, \dots , u_{\ell-1}^t):u_\ell^t. 
	 \end{equation}
	 
	 Let $n_p$ be the minimal number of homogeneous generators of $(f_1,\dots ,f_{p-1}):f_p$ for all $p=1,\dots, q$. Also, for every $t\geq 1$ and $1\leq \ell\leq s_t$ let $m_\ell^t$ be the minimal number of homogeneous generators of $(u_1^t, u_2^t, \dots ,u_{\ell-1}^t):u_\ell^t$. Then using Theorem \ref{thm:betti number of linear quotients} we have 
	  \begin{equation*}
	 \begin{split}
	 b_{i,i+j}(J(G)) & =   \sum_{\sdeg(f_p)=j}{n_p \choose i}
 \\
 & = \sum_{\substack {\sdeg (f_p)=j \\ p\leq s_1}}{{n_p}\choose{i}} +\sum_{\substack {\sdeg (f_p)=j \\ p> s_1}}{{n_p}\choose{i}}\\
  & =  \sum_{\substack {\sdeg (u_p^1)=j-|N(x_1)| \\ p\leq s_1}}{{m_p^1}\choose{i}} +\sum_{t=2}^{r}\sum_{\substack{1\leq \ell \leq s_t\\ \sdeg(u_\ell^t)=j-|N(x_t)|}}{m_\ell^t+1\choose i}\quad  \text{ by \eqref{eq:f1} and \eqref{eqref:fp}}\\
	 & = b_{i,i+j-|N(x_1)|}(J(H_1))+ \sum_{t=2}^{r}\sum_{\substack{1\leq \ell \leq s_t\\ \sdeg(u_\ell^t)=j-|N(x_t)|}} {m_\ell^t\choose i-1}+{m_\ell^t\choose i} \\
	 & = \sum_{t=1}^{r} b_{i, i+j-|N(x_t)|}(J(H_t)) + \sum_{t=2}^{r}b_{i-1,i+j-|N(x_t)|}(J(H_t))
	 \end{split}
	 \end{equation*}
	 as desired.
\end{proof}
%----------------------------------------------------
%       total betti numbers
%---------------------------------------------------

\begin{theorem}[\textbf{Recursive formula for total Betti numbers}]\label{thm:recursive formula}
Let $G$ be a chordal graph with a vertex $x_1$ such that $N[x_1]=\{x_1,\dots, x_r\}$ induces a clique. Let $H_i$ be the graph obtained from $G$ by removing the closed neighborhood of $x_i$. Then for all $i\geq 1$,
$$b_i(J(G)) =\sum\limits_{t=1}^{r}b_i(J(H_t))+\sum_{t=2}^{r}b_{i-1}(J(H_t))$$
where if $H$ has no edges we set $b_0(J(H))=1$ and $b_p(J(H))=0$ for any $p\neq 0$.
\end{theorem}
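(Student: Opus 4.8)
The plan is to obtain this formula as an immediate consequence of the graded recursion in Theorem~\ref{thm:graded recursive} by summing over all internal degrees. Recall that for any homogeneous ideal $I$ the total Betti number satisfies $b_i(I)=\sum_j b_{i,i+j}(I)$. Accordingly, I would begin by summing both sides of the identity in Theorem~\ref{thm:graded recursive} over all $j$, and then collapse each resulting sum back into a total Betti number by an appropriate shift of the summation variable.

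Summing the left-hand side gives at once $\sum_j b_{i,i+j}(J(G))=b_i(J(G))$. For the first sum on the right I would fix $t$ and substitute $k=j-|N(x_t)|$; since $j$ ranges over all integers, so does $k$, and $b_{i,i+j-|N(x_t)|}(J(H_t))=b_{i,i+k}(J(H_t))$, so the inner sum collapses to $\sum_k b_{i,i+k}(J(H_t))=b_i(J(H_t))$. Summing over $t$ then yields $\sum_{t=1}^{r} b_i(J(H_t))$.

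The second sum requires slightly more care because its homological index is $i-1$ rather than $i$. Here I would rewrite the internal degree as $i+j-|N(x_t)|=(i-1)+\bigl(j+1-|N(x_t)|\bigr)$ and substitute $k=j+1-|N(x_t)|$, so that summing over $j$ recovers $\sum_k b_{i-1,(i-1)+k}(J(H_t))=b_{i-1}(J(H_t))$. Combining the three pieces produces exactly the asserted identity $b_i(J(G))=\sum_{t=1}^{r}b_i(J(H_t))+\sum_{t=2}^{r}b_{i-1}(J(H_t))$.

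The only genuine bookkeeping is this index shift in the second sum; everything else is purely formal. Finally, I would verify that the degenerate conventions are consistent under summation: when $H$ has no edges, summing the graded convention $b_{0,1}(J(H))=1$ and $b_{p,q}(J(H))=0$ for $(p,q)\neq(0,1)$ over the internal degree gives $b_0(J(H))=1$ and $b_p(J(H))=0$ for $p\neq 0$, which is precisely the convention stated in the theorem. I therefore expect no obstacle beyond the routine index arithmetic.
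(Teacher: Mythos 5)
Your proof is correct. It takes a different route from the paper: you deduce the total formula formally from the graded one in Theorem~\ref{thm:graded recursive} by summing over the internal degree $j$ and reindexing, whereas the paper proves the total version directly, repeating the computation of the graded case with the linear quotients ordering $f_1,\dots,f_q$ from Theorem~\ref{thm:vv linear quotients} and the identity $b_i(I)=\sum_p\binom{n_p}{i}$ from Theorem~\ref{thm:betti number of linear quotients}. Your index shifts are right (in particular the rewriting $i+j-|N(x_t)|=(i-1)+(j+1-|N(x_t)|)$ in the second sum), all sums are finite since only finitely many graded Betti numbers are nonzero, and the degenerate conventions for edgeless $H$ do match under summation. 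What your approach buys is economy: once the graded statement is in hand, the total statement requires no further appeal to the linear quotients structure. What the paper's direct approach buys is independence — its proof of the total formula would stand even if one only wanted the total version, and it makes the source of the two sums (the extra generator $x_t$ in the colon ideal for $t\geq 2$, via the Pascal identity $\binom{m+1}{i}=\binom{m}{i}+\binom{m}{i-1}$) visible without passing through the graded bookkeeping. Either way the content is the same, and your argument is a complete and valid proof.
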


%The given formula does not hold for $S/J!$
%Note: In this formula $b_0(J(K))=1$ if $K$ is an isolated vertex, or an empty graph i.e., $b_1(S/J(K))=1$. Must be careful about this in the inductive step, otherwise the formula is wring!! check the path of lengths 3 and 4. the formula does not hold for complete graphs i think
\begin{proof}
	Let $f_1,\dots, f_q$ be as in the proof of Theorem \ref{thm:graded recursive}. Let $n_p$ be the minimal number of homogeneous generators of $(f_1,\dots ,f_{p-1}):f_p$ for all $p=1,\dots, q$. Also, for every $t\geq 2$ and $1\leq j\leq s_t$ let $m_j^t$ be the minimal number of homogeneous generators of $(u_1^t, u_2^t, \dots ,u_{j-1}^t):u_j^t$. Then using Theorem \ref{thm:betti number of linear quotients} we have the following equation.
	 \begin{equation*}
	 \begin{split}
	 b_i(J(G)) & = \sum\limits_{p=1}^{q}{{n_p}\choose{i}} \\
	 & = \sum_{p=1}^{s_1} {{n_p}\choose {i}} +  \sum_{p=s_1+1}^{q} {{n_p}\choose {i}}\\
	 & = b_i(J(H_1)) + \sum_{t=2}^r\sum_{j=1}^{s_j}{m_j^t+1\choose i} \quad \text{by \eqref{eq:f1} and \eqref{eqref:fp}}\\
	 & = b_i(J(H_1)) + \sum_{t=2}^r\sum_{j=1}^{s_j}{m_j^t\choose i-1}+{m_j^t\choose i}\\
	 & = b_i(J(H_1))+\sum\limits_{t=2}^{r}b_i(J(H_t))+\sum_{t=2}^{r}b_{i-1}(J(H_t))
	 \end{split}
	 \end{equation*}
	 as desired.
\end{proof}

We can apply the theorem above to the path graphs which form a subclass of chordal graphs.

\begin{cor} Let $P_n$ denote a path on $n$ vertices for $n\ge 4$. Then for every $i\geq 1$
		$$b_i(J(P_n))=b_i(J(P_{n-2}))+b_i(J(P_{n-3}))+b_{i-1}(J(P_{n-3})). $$
	\end{cor}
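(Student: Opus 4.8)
The plan is to apply Theorem~\ref{thm:recursive formula} directly to $P_n$, choosing $x_1$ to be an endpoint of the path. I would label the vertices of $P_n$ as $x_1, x_2, \dots, x_n$ so that $\{x_j, x_{j+1}\}$ is an edge for each $j$, with $x_1$ the endpoint. Then $x_1$ has the single neighbor $x_2$, so $N[x_1] = \{x_1, x_2\}$ induces the clique $K_2$; in the notation of the theorem this gives $r = 2$, and the hypotheses are satisfied since $P_n$ is chordal.

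First I would identify the two relevant subgraphs $H_1 = P_n \setminus N[x_1]$ and $H_2 = P_n \setminus N[x_2]$. Removing $N[x_1] = \{x_1, x_2\}$ leaves the induced path on $x_3, \dots, x_n$, which is $P_{n-2}$. Since $x_2$ has neighbors $x_1$ and $x_3$, we have $N[x_2] = \{x_1, x_2, x_3\}$, and removing it leaves the induced path on $x_4, \dots, x_n$, which is $P_{n-3}$. In the boundary case $n = 4$ the graph $H_2 = P_1$ has no edges, so the convention $b_0(J(P_1)) = 1$ and $b_p(J(P_1)) = 0$ for $p \neq 0$ stated in the theorem applies.

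With these identifications, Theorem~\ref{thm:recursive formula} gives, for all $i \geq 1$,
$$b_i(J(P_n)) = b_i(J(H_1)) + b_i(J(H_2)) + b_{i-1}(J(H_2)) = b_i(J(P_{n-2})) + b_i(J(P_{n-3})) + b_{i-1}(J(P_{n-3})),$$
which is the claimed recursion. There is essentially no obstacle here, as the result is a specialization of the general recursive formula; the only points demanding care are the correct computation of the deleted closed neighborhoods (in particular that deleting $N[x_2]$ removes three vertices, not two) and the edgeless boundary case $n = 4$. The hypothesis $n \geq 4$ is exactly what guarantees that both $P_{n-2}$ (with $n-2 \geq 2$) and $P_{n-3}$ (with $n-3 \geq 1$) are well-defined.
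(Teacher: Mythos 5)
Your proof is correct and follows exactly the paper's argument: apply Theorem~\ref{thm:recursive formula} at an endpoint $x_1$ of the path, so that $N[x_1]=\{x_1,x_2\}$ induces a $K_2$, $H_1=P_{n-2}$ and $H_2=P_{n-3}$. Your added care about the $n=4$ boundary case is a nice touch but does not change the substance.
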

	\begin{proof}
		Let $x_1x_2, x_2x_3,\dots , x_{n-1}x_n$ be the edges of $P_n$. Then $N[x_1]=\{x_1,x_2\}$ induces a clique on $2$ vertices. As $G\setminus N[x_1]=P_{n-2}$ and $G\setminus N[x_2]=P_{n-3}$, the result follows immediately. 
	\end{proof}
	
	\begin{remark}
	For any path $P_n$ on $n\geq 1$ vertices, the Betti number $b_0(J(P_n))$ is the number of minimal vertex covers of of $P_n$. By Theorem \ref{thm:vv linear quotients} for any $n\geq 4$ we have the recursive formula
	$$b_0(J(P_n))=b_0(J(P_{n-2}))+b_0(J(P_{n-3})). $$
	
For any $n\geq 3$ the sequence defined by the recurrence relation
$$p_n=p_{n-2}+p_{n-3}, \quad  \ p_0=p_1=p_2=1 $$
is known as the Padovan sequence. Therefore the sequence $\{b_0(J(P_n))\}_{n\geq 1}$ of Betti numbers coincides with the sequence $\{p_n\}_{n\geq 2}$.
	\end{remark}
%----------------------------------------------------
%  projective dimension
%---------------------------------------------------
Due to a well known result of Terai \cite{terai} regularity of a squarefree monomial ideal is related to the projective dimension of its Alexander dual.

\begin{theorem}\cite{terai}\label{terai}
Let $I\subseteq S$ be a squarefree monomial ideal. Then
$$\reg(I)=\pd(S/I^{\vee}).  $$
	\end{theorem}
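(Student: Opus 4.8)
The plan is to pass from commutative algebra to topology via Stanley--Reisner theory and then combine Hochster's formula with combinatorial Alexander duality. First I would write $I=I_\Delta$, the Stanley--Reisner ideal of the simplicial complex $\Delta$ on $[n]=\{1,\dots,n\}$ whose minimal nonfaces are the supports of the minimal generators of $I$; then $I^\vee=I_{\Delta^\vee}$ is the Stanley--Reisner ideal of the Alexander dual $\Delta^\vee=\{\sigma\subseteq[n]:[n]\setminus\sigma\notin\Delta\}$, so that $S/I^\vee=\Bbbk[\Delta^\vee]$. The two topological inputs are Hochster's formula, which for any complex $\Gamma$ computes the multigraded Betti numbers of $\Bbbk[\Gamma]$ as $\beta_{i,W}(\Bbbk[\Gamma])=\dim_\Bbbk\tilde H_{|W|-i-1}(\Gamma|_W;\Bbbk)$ for $W\subseteq[n]$, where $\Gamma|_W$ is the induced subcomplex; and combinatorial Alexander duality, which for a complex $\Sigma$ on a vertex set of size $m$ gives $\tilde H_{s}(\Sigma^{\vee};\Bbbk)\cong\tilde H^{\,m-s-3}(\Sigma;\Bbbk)$, reduced cohomology having the same dimension as homology over the field $\Bbbk$.

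Reading the two invariants off Hochster's formula, and using $\beta_{i,W}(I_\Delta)=\beta_{i+1,W}(\Bbbk[\Delta])$, I would record
\[
\reg(I)=\max\{\,|W|-i:\tilde H_{|W|-i-2}(\Delta|_W;\Bbbk)\neq 0\,\}=2+\max\{\,k:\tilde H_k(\Delta|_W;\Bbbk)\neq 0\,\},
\]
while Hochster applied to $\Delta^\vee$ gives
\[
\pd(S/I^\vee)=\max\{\,i:\tilde H_{|W|-i-1}\big((\Delta^\vee)|_W;\Bbbk\big)\neq 0\,\}.
\]
The bridge between the two is the set-theoretic identity $(\Delta^\vee)|_W=\big(\operatorname{link}_\Delta([n]\setminus W)\big)^{\vee}$, where the Alexander dual on the right is formed on the vertex set $W$; this is a direct check from the definitions of restriction, link, and dual. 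Applying Alexander duality inside $W$ then converts the homology of $(\Delta^\vee)|_W$ into that of $\operatorname{link}_\Delta([n]\setminus W)$, and tracking the degree shift yields $\pd(S/I^\vee)=2+\max\{\,\ell:\tilde H_\ell(\operatorname{link}_\Delta W;\Bbbk)\neq 0\,\}$, the maximum now taken over all $W\subseteq[n]$.

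Comparing the expressions for $\reg(I)$ and $\pd(S/I^\vee)$, the theorem reduces to the equality of extremal homological degrees
\[
\max\{\,k:\tilde H_k(\Delta|_W;\Bbbk)\neq 0\,\}=\max\{\,\ell:\tilde H_\ell(\operatorname{link}_\Delta W;\Bbbk)\neq 0\,\}.
\]
This is the step I expect to be the main obstacle: restriction and Alexander--dualization do not commute, so one side concerns induced subcomplexes and the other concerns links, and there is genuinely no term-by-term correspondence of Betti numbers (only the extremal data agree). I would resolve it by observing that both maxima compute the same invariant, namely $\reg(\Bbbk[\Delta])-1$: the restriction maximum gives $\reg(\Bbbk[\Delta])$ through the minimal free resolution (the form of Hochster's formula used above), while the link maximum gives $\reg(\Bbbk[\Delta])$ through local cohomology, via the link form of Hochster's formula (Gr\"abe) for $H^i_{\mathfrak m}(\Bbbk[\Delta])$ together with the standard characterization of regularity by the $a$-invariants. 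Once these two computations of $\reg(\Bbbk[\Delta])$ are matched, the displayed equality follows, and hence $\reg(I)=\pd(S/I^\vee)$.
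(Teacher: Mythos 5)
This statement is imported from Terai's paper and is not proved in the text you were given---it is used as a black box (to pass from $\pd(J(G))$ to $\reg(I(G))$ in the corollary that follows it)---so there is no in-paper argument to compare yours against. On its own merits, your proof is correct and is essentially the classical route to Terai's formula. The three ingredients you assemble---Hochster's restriction formula for the graded Betti numbers of $\Bbbk[\Delta]$ and $\Bbbk[\Delta^\vee]$, the identity $(\Delta^\vee)|_W=\bigl(\operatorname{link}_\Delta([n]\setminus W)\bigr)^\vee$ (dual taken inside $W$), and combinatorial Alexander duality---do reduce the theorem to the equality of the two extremal homological degrees you display, and your resolution of that step is sound: the restriction maximum is $\reg(\Bbbk[\Delta])-1$ by the free-resolution form of Hochster's formula, while the link maximum is $\reg(\Bbbk[\Delta])-1$ by the Hochster--Gr\"abe formula for $H^i_{\mathfrak m}(\Bbbk[\Delta])$ together with $\reg(M)=\max\{i+j: H^i_{\mathfrak m}(M)_j\neq 0\}$ (when maximizing $i+j$ one takes the multidegree $-\chi_\sigma$, which is what makes the shift come out to $\ell+1$). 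You are also right that this is the genuine content of the theorem: there is no multigraded identification of the two resolutions, only the extremal data match. Two small points to tidy up: in your final display the link should be $\operatorname{link}_\Delta([n]\setminus W)$ rather than $\operatorname{link}_\Delta W$ (harmless, since you maximize over all $W$, but as written the formula is off); and you should restrict to $0\neq I\neq S$ and note that faces $\sigma\notin\Delta$ contribute void links with no reduced cohomology, so they do not disturb either maximum.
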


As another consequence of Theorem \ref{thm:recursive formula} we obtain a new proof of the following result.

	\begin{cor}\cite{ha van tuyl, kimura, woodroofe, zheng}
		If $G$ is a chordal graph, then 
\begin{enumerate}
\item $\pd(J(G))=\im(G)$,
\item  $\reg(S/I(G))=\im(G)$.
\end{enumerate}
		\end{cor}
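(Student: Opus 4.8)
The plan is to prove both statements by combining the recursive structure from Theorem~\ref{thm:recursive formula} with two classical dualities. For part~(2), I would invoke Terai's result (Theorem~\ref{terai}): since $I(G)^{\vee}=J(G)$ (the Alexander dual of the edge ideal is the cover ideal), we have $\reg(S/I(G))=\pd(S/I(G)^{\vee})=\pd(S/J(G))=\pd(J(G))+1$. Wait—one must be careful with the index shift between $\pd(J(G))$ and $\pd(S/J(G))$; the standard relation is $\pd(S/J(G))=\pd(J(G))+1$, so Terai gives $\reg(S/I(G))=\pd(J(G))+1$. This does not immediately match statement~(2), so I suspect the intended reading is that (1) and (2) together encode the single identity $\reg(S/I(G))=\im(G)$ via $\pd(J(G))=\im(G)$, and the index conventions in the paper's $\pd$ definition (applied to the ideal $J(G)$, not $S/J(G)$) must be tracked precisely. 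The safe route is to establish (1) first and then read off (2) through Terai's theorem, verifying the index bookkeeping once and for all.

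The heart of the argument is therefore part~(1): $\pd(J(G))=\im(G)$. \textbf{First} I would set up an induction on the number of vertices, using the clique-elimination decomposition that drives Theorem~\ref{thm:graded recursive}. Since $J(G)$ has linear quotients (Theorem~\ref{thm: linear quotient of chordal graphs}), Theorem~\ref{thm:betti number of linear quotients} tells us that $\pd(J(G))$ equals the maximum, over all generators $f_p$ in the linear quotients ordering, of $n_p$, the number of variables generating $(f_1,\dots,f_{p-1}):f_p$. From the total Betti recursion $b_i(J(G))=\sum_{t=1}^r b_i(J(H_t))+\sum_{t=2}^r b_{i-1}(J(H_t))$, the projective dimension satisfies
\begin{equation*}
\pd(J(G))=\max\Bigl\{\max_{1\le t\le r}\pd(J(H_t)),\ 1+\max_{2\le t\le r}\pd(J(H_t))\Bigr\},
\end{equation*}
where $H_t=G\setminus N[x_t]$ and $x_1$ is a simplicial vertex with $N[x_1]$ inducing a clique. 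By induction $\pd(J(H_t))=\im(H_t)$ for each $t$.

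\textbf{Next} I would translate this into a statement about induced matchings. The term $1+\im(H_t)$ for $t\ge 2$ should correspond to prepending the edge $\{x_1,x_t\}$ to an induced matching of $H_t=G\setminus N[x_t]$: since $N[x_1]\subseteq N[x_t]$ and removing $N[x_t]$ deletes every vertex adjacent to either endpoint of $\{x_1,x_t\}$, the edge $\{x_1,x_t\}$ together with any induced matching of $H_t$ forms an induced matching of $G$, giving $\im(G)\ge 1+\max_{t\ge 2}\im(H_t)$. Conversely the terms $\im(H_t)=\im(G\setminus N[x_t])\le \im(G)$ are immediate. \textbf{The main obstacle} will be the reverse inequality $\im(G)\le\pd(J(G))$, i.e.\ showing that a \emph{largest} induced matching of $G$ is captured by one of the recursive branches: given a maximum induced matching $M$ of $G$, one must argue that either no edge of $M$ meets $N[x_1]$ (so $M$ survives in some $H_t$), or exactly one edge $\{x_1,x_t\}$ or $\{x_i,x_t\}$ of $M$ touches $N[x_1]$ in a controlled way, leaving an induced matching of size $|M|-1$ inside the appropriate $H_t$. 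Handling the case where an edge of $M$ uses a neighbor of $x_1$ but not $x_1$ itself requires the simpliciality of $x_1$ to reroute the matching through the correct deleted neighborhood; this case analysis, rather than the algebra, is where the care lies.

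Finally, \textbf{for the base case} and the degenerate branches I would use Proposition~\ref{complete graph} (for $G=K_n$, where $\pd(J(G))=1=\im(K_n)$) and the convention $b_0(J(H))=1$ when $H$ has no edges (so $\pd=0=\im(H)$). With (1) in hand, part~(2) follows by the Terai computation above, once the $S/J(G)$ versus $J(G)$ index shift is reconciled with the paper's definition of $\pd$.
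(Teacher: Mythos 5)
Your proposal is correct and follows essentially the same route as the paper: Terai's theorem (Theorem~\ref{terai}) reduces (2) to (1), and (1) is proved by induction using the projective-dimension recursion read off from Theorem~\ref{thm:recursive formula}, matching $\max\{\im(H_1),\max_{i\geq 2}(\im(H_i)+1)\}$ against $\im(G)$ exactly as you outline. The only remark worth making is that the step you flag as ``the main obstacle'' is in fact immediate and needs no rerouting via simpliciality: if an edge $e$ of a maximum induced matching $M$ contains some $x_i$ with $i\geq 2$, then by the very definition of induced matching every other edge of $M$ is disjoint from $N[x_i]$, so $M\setminus\{e\}$ is an induced matching of $H_i$ and $\im(G)\leq \im(H_i)+1$; otherwise $M$ avoids $N[x_1]$ entirely and survives in $H_1$.
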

	\begin{proof}
	By Theorem \ref{terai} it suffices to prove (1) as $I(G)^\vee=J(G)$. We proceed by induction on the number of vertices of the graph. First note that the result is clear if $G$ has no edges. Also, we may assume that $G$ has no isolated vertices as they do not affect the cover ideal. So, let us assume that $G$ has a vertex $x_1$ such that $N[x_1]=\{x_1,\dots ,x_r\}$ induces a clique for some $r\geq 2$. Let $$\kappa=\max\{  \im(H_1), \max\{\im(H_i)+1: i=2,\dots ,r\}          \}$$ where $H_i$ are as in the statement of the Theorem \ref{thm:recursive formula}. By induction assumption and Theorem \ref{thm:recursive formula} it suffices to show that $\kappa=\im(G)$. It is clear that $\im(H_1)\leq\im(G)$ since $H_1$ is an induced subgraph of $G$. Also, any induced matching of $H_i$ where $i\geq 2$ can be extended to an induced matching of $G$ by adding the edge $x_1x_i$. Therefore, $\kappa\leq \im(G)$. 

Conversely, let $A$ be an induced matching of $G$ of maximum cardinality. If there exists an edge $e\in A$ such that $x_i\in e$ for some $i\geq 2$, then removing $e$ from $A$ yields an induced matching for $H_i$ and, $\im(G)\leq \im(H_i)+1$. Otherwise, $A$ is an induced matching of $H_1$ and $\im(G)\leq \im(H_1)$. Thus in both cases $\im(G)\leq \kappa$.
	\end{proof}

\subsection{Unmixed chordal graphs with $1$-dimensional independence complexes}\label{unmixed section}

% not needed:
%\begin{lemma}\cite[Lemma 6.7.12]{vil}\label{lem:vil chordal}
%Let $G$ be a chordal graph, and let $K$ be a complete subgraph of $G$. If $K\neq G$, then there is a vertex $x\notin V(K)$ such that the subgraph induced by the neighbor set $N(x)$ of $x$ is a complete subgraph.
%\end{lemma}

In this section, we demonstrate our recursive formulas on some unmixed chordal graphs. We will use the following characterization of unmixed chordal graphs due to Herzog et al.\cite{hhz chordal}.
\begin{theorem}\cite[Theorem 2.1]{hhz chordal}\label{CM chordal graphs}
	Let $K$ be a field, and let $G$ be a chordal graph on the vertex set $[n]$. Let $F_1,\dots ,F_m$
	be the facets of $\Delta(G)$ which admit a free vertex. Then the following conditions
	are equivalent:
	\begin{itemize}
		\item $G$ is Cohen-Macaulay;
		\item  $G$ is Cohen-Macaulay over $K$;
		\item $G$ is unmixed;
		\item $[n]$ is the disjoint union of $F_1,\dots ,F_m$.
	\end{itemize}
\end{theorem}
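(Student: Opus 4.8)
The plan is to establish the cycle $(1)\Rightarrow(2)\Rightarrow(3)\Rightarrow(4)\Rightarrow(1)$, in which the only substantial step is $(3)\Rightarrow(4)$. The implication $(1)\Rightarrow(2)$ is immediate, since being Cohen-Macaulay over every field specializes to $K$. For $(2)\Rightarrow(3)$ I would use that the Stanley--Reisner ring of $\ind(G)$ is exactly $S/I(G)$ (the minimal non-faces of $\ind(G)$ are the edges of $G$); hence if $G$ is Cohen-Macaulay over $K$ then $\ind(G)$ is a Cohen-Macaulay complex, which is therefore pure, and purity of $\ind(G)$ says all maximal independent sets have the same size, i.e. $G$ is unmixed. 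For $(4)\Rightarrow(1)$ I would first note that the hypothesis forces $\ind(G)$ to be pure: if $[n]=F_1\sqcup\dots\sqcup F_m$ with each $F_i=N[u_i]$ a maximal clique carrying a simplicial vertex $u_i$, then any independent set meets each $F_i$ in at most one vertex, and if it missed some $F_i$ entirely it could be enlarged by $u_i$ (whose neighbours all lie in $F_i$); thus every maximal independent set meets each $F_i$ in exactly one vertex and has cardinality $m$. So $\ind(G)$ is pure, and since $G$ is chordal it is shellable by Theorem~\ref{thm: linear quotient of chordal graphs}; a pure shellable complex is Cohen-Macaulay over every field, giving $(1)$.

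It remains to prove the combinatorial implication $(3)\Rightarrow(4)$. The basic tool is Lemma~\ref{lem: mvc of chordal} applied to a simplicial vertex $u$ with $N[u]$ a clique $K_r$: it shows every minimal vertex cover of $G$ contains exactly $r-1$ vertices of $N[u]$, equivalently that every maximal independent set meets each such simplicial clique $N[u]$ in exactly one vertex. I would argue by induction on $|V(G)|$. Choose a simplicial vertex $v$, put $F=N[v]$ and $G'=G\setminus F$. First one checks that $G'$ is again unmixed: adding $v$ to any maximal independent set of $G'$ gives a maximal independent set of $G$ (as $v$ has no neighbour outside $F$), and conversely the maximal independent sets of $G$ containing $v$ are exactly the sets $\{v\}\cup I'$ with $I'$ maximal independent in $G'$; comparing cardinalities and using that $G$ is unmixed forces all maximal independent sets of $G'$ to share one size. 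By the induction hypothesis $V(G')=F'_1\sqcup\dots\sqcup F'_{m'}$, a disjoint union of maximal cliques of $G'$ each admitting a free vertex, and by the pure argument above $\ind(G')$ has independence number $m'$.

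The heart of the argument is to promote this decomposition to one of $G$, i.e. to show $F,F'_1,\dots,F'_{m'}$ are precisely the facets of $\Delta(G)$ admitting a free vertex. These sets are automatically disjoint and cover $[n]$, and $F$ is a facet with free vertex $v$. The key lemma is that no vertex $w\in F$ is adjacent to all of a given $F'_j$; granting it, each $F'_j$ remains a facet of $\Delta(G)$, and any facet of $\Delta(G)$ with a free vertex must coincide with one of $F,F'_1,\dots,F'_{m'}$ (that free vertex lies in exactly one of these cliques). To prove the key lemma, suppose $w\in F$ were adjacent to every vertex of $F'_j$. Extend $\{w\}$ to a maximal independent set $I$ of $G$; since $G$ is unmixed, $|I|=m'+1$, and by the exactly-one property $I\cap F=\{w\}$, so $I\setminus\{w\}$ is a maximum independent set of $G'$. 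Its unique vertex in the simplicial clique $F'_j$ would then be a neighbour of $w$ lying in $I$, contradicting independence.

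The main obstacle is the remaining point: that each $F'_j$ still carries a free vertex of $\Delta(G)$, not merely that it stays a facet. A free vertex of $F'_j$ in $G$ must be a free vertex of $F'_j$ in $G'$ having no neighbour in $F$, so I would study, for $p\in F'_j$, the sets $A(p)=N(p)\cap F$. Chordality forces these to form a chain under inclusion: if $A(p)$ and $A(q)$ were incomparable, picking $w\in A(p)\setminus A(q)$ and $w'\in A(q)\setminus A(p)$ would produce an induced $4$-cycle $p\,w\,w'\,q$, using that $w\sim w'$ inside the clique $F$ and $p\sim q$ inside the clique $F'_j$. The key lemma gives $\bigcap_{p\in F'_j}A(p)=\emptyset$, so the minimal member of the chain is empty, producing a vertex of $F'_j$ with no neighbour in $F$. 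Reconciling ``minimal $A(p)$ over all of $F'_j$'' with ``free vertex in $G'$'' — that is, ensuring the surviving empty-$A$ vertex is one of the free vertices guaranteed by the induction hypothesis for the unmixed graph $G'$ — is exactly the delicate bookkeeping where unmixedness must be fed back in, and I expect this to be the most technical part of the proof.
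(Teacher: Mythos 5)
The paper does not prove this statement---it is quoted from \cite{hhz chordal} without proof---so there is no in-paper argument to compare against; I am judging your proposal on its own merits.

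Most of what you write is correct. The implications $(1)\Rightarrow(2)$ and $(2)\Rightarrow(3)$ are standard, and $(4)\Rightarrow(1)$ works: your observation that the decomposition forces every maximal independent set to meet each $N[u_i]$ exactly once gives purity, and combining purity with shellability of $\ind(G)$ (Theorem \ref{thm: linear quotient of chordal graphs}, whose proof does not rely on the present theorem, so there is no circularity) gives Cohen--Macaulayness over every field. In $(3)\Rightarrow(4)$, the unmixedness of $G'=G\setminus N[v]$, the key lemma (no $w\in F$ is adjacent to all of $F'_j$), the conclusion that each $F'_j$ remains a facet of $\Delta(G)$, the uniqueness of the list of facets with free vertices, and the chain structure of the sets $A(p)=N(p)\cap F$ under chordality are all argued correctly.

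However, the proof is incomplete exactly where you say it is, and the missing step is the crux rather than bookkeeping. You produce a vertex of $F'_j$ with $A(p)=\emptyset$ (the bottom of the chain) and, separately, a vertex of $F'_j$ that is free in $\Delta(G')$ (the induction hypothesis), but a free vertex of $F'_j$ in $\Delta(G)$ must be both simultaneously, and nothing in your argument forces the two to coincide. The difficulty is real: a free vertex of $F'_j$ in $G'$ can genuinely acquire a neighbour in $F$. For instance, let $F=\{v,w_1,w_2\}$ be a triangle with $v$ simplicial, let $G'$ be the single edge $\{p,q\}$, and add the edge $w_1p$; this $G$ is chordal and unmixed, both $p$ and $q$ are free in $G'$, yet $A(p)=\{w_1\}\neq\emptyset$. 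So one cannot take an arbitrary free vertex supplied by the induction; one must show that among the free vertices of $F'_j$ in $G'$ (on which the sets $A(\cdot)$ again form a chain) the minimal one has empty $A$. Ruling out the bad configuration---where some $w\in F$ is adjacent to every $G'$-free vertex of $F'_j$ while every vertex of $F'_j$ not adjacent to $w$ has a neighbour in some other $F'_i$---requires feeding the unmixedness of $G$ itself back in (e.g.\ by exhibiting an independent set, one vertex from each clique other than $F'_j$, that dominates all of $F'_j$ and hence extends to a maximal independent set missing $F'_j$). Until that argument is supplied, $(3)\Rightarrow(4)$, and with it the whole equivalence, is unproven.
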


\begin{remark}\label{rk:b0}
	For any graph $G$, the number of minimal vertex covers is equal to the number of maximal independent sets. Therefore, $b_0(J(G))$ is the number of facets of $\ind(G)$.
\end{remark}

\begin{proposition}\label{prop:1-dimensional unmixed}
Let $G$ be an unmixed chordal graph on $n$ vertices such that the independence complex of $G$ has dimension $1$. Then 
\begin{equation*}
\begin{split}
b_1(J(G)) & = 2b_0(J(G))-n \\
b_2(J(G)) & = b_0(J(G))-n+1
\end{split}
\end{equation*}
\end{proposition}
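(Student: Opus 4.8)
The plan is to read off the two Betti numbers from the linear quotients structure of $J(G)$ together with Theorem~\ref{thm:betti number of linear quotients}. First I would record the combinatorial meaning of the hypotheses. Since $\ind(G)$ has dimension $1$, every maximal independent set has at most two elements, and since $G$ is unmixed all maximal independent sets — equivalently all minimal vertex covers — have the same size; hence every maximal independent set has size exactly $2$, and $J(G)$ is generated in the single degree $n-2$. Writing $\Gamma:=\ind(G)$ as a simple graph on the $n$ vertices of $G$, its edges are precisely the maximal independent sets, and the minimal generators of $J(G)$ are the complements $f_e=V(G)\setminus e$ for $e\in E(\Gamma)$. Thus $b_0(J(G))=|E(\Gamma)|=:m$ by Remark~\ref{rk:b0}, and since every vertex of $G$ lies in some maximal independent set, $\Gamma$ has no isolated vertex.

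Next I would fix a linear quotients ordering $f_{e_1},\dots,f_{e_m}$ of the generators, which exists by Theorem~\ref{thm: linear quotient of chordal graphs}, and identify the numbers $n_p$ counting the minimal generators of $(f_{e_1},\dots,f_{e_{p-1}}):f_{e_p}$. Since $f_{e_q}:f_{e_p}=\prod_{x\in e_p\setminus e_q}x$ is a monomial supported on the two-element set $e_p$, the colon ideal is supported on $e_p$, whence $n_p\le 2$; and because $\Gamma$ is a simple graph, the only edge containing both endpoints of $e_p$ is $e_p$ itself. The hard part will be turning this into a clean count: I claim that in any linear quotients ordering, a variable $x\in e_p$ is a minimal generator of the colon ideal exactly when the other endpoint of $e_p$ is incident to some earlier edge $e_q$ with $q<p$. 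Granting this, $n_p$ equals the number of endpoints of $e_p$ that already appear in an earlier edge.

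With this description the computation of $b_1$ is a handshake argument: by Theorem~\ref{thm:betti number of linear quotients}, $b_1(J(G))=\sum_{p=1}^m n_p$, and summing the contributions vertex by vertex gives $\sum_p n_p=\sum_v(\deg_\Gamma(v)-1)=2m-n$, using that $\Gamma$ has no isolated vertex (each $v$ contributes to every incident edge except the first, i.e. $\deg_\Gamma(v)-1$ times). This yields $b_1(J(G))=2b_0(J(G))-n$. For $b_2$ I would exploit that $n_p\in\{0,1,2\}$: the first generator has $n_1=0$, while for $p\ge 2$ the colon ideal is nonzero and, being generated by variables, satisfies $n_p\ge 1$, so exactly one index has $n_p=0$. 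Letting $m_j=\#\{p:n_p=j\}$, Theorem~\ref{thm:betti number of linear quotients} gives $b_2(J(G))=\sum_p\binom{n_p}{2}=m_2$, together with $m_0+m_1+m_2=m$, $m_0=1$, and $m_1+2m_2=b_1=2m-n$. Solving these linear relations gives $m_2=m-n+1$, that is $b_2(J(G))=b_0(J(G))-n+1$, completing the proof.
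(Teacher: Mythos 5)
Your proof is correct, but it takes a genuinely different route from the paper. The paper invokes the Herzog--Hibi--Zheng characterization of unmixed chordal graphs (Theorem \ref{CM chordal graphs}) to write the vertex set as a disjoint union of two cliques $F_1,F_2$ with free vertices, and then feeds this structure into the recursive formula of Theorem \ref{thm:recursive formula}: each deletion $G\setminus N[a_i]$ is a complete graph $K_{p-t_i}$, so Proposition \ref{complete graph} finishes the computation. You instead bypass both the structure theorem and the recursion entirely: you observe that unmixedness plus $\dim\ind(G)=1$ forces all generators of $J(G)$ to have the form $V(G)\setminus e$ for edges $e$ of the graph $\Gamma=\ind(G)$, compute the colon ideals in any linear quotients ordering explicitly (each is supported on the two variables of $e_p$, and $x\in e_p$ lies in the colon precisely when the other endpoint occurs in an earlier edge --- this divisibility check is right), and then apply Theorem \ref{thm:betti number of linear quotients} together with a handshake count $\sum_p n_p=\sum_v(\deg_\Gamma(v)-1)=2m-n$ and the elementary bookkeeping $n_p\in\{0,1,2\}$ with $n_p=0$ only for $p=1$. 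All steps check out, including the facts that every vertex lies in some maximal independent set (so $\Gamma$ has no isolated vertices) and that the colon ideal is nonzero for $p\ge 2$. What your argument buys is generality and self-containedness: it never uses chordality beyond the existence of a linear quotients ordering, so it actually proves the formulas for any unmixed graph with $1$-dimensional shellable independence complex. What the paper's argument buys is a demonstration of the recursive machinery that is the point of Section 4, and a template that one might hope to push to higher-dimensional unmixed chordal independence complexes via the same clique decomposition.
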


\begin{proof}
Let $F_1,\dots ,F_m$ be the facets of $\Delta(G)$ which admit a free vertex. By Theorem~\ref{CM chordal graphs} the vertex set of $G$ is the disjoint union of $F_1,\dots ,F_m$. As $\ind(G)$ has dimension one, $m=2$. Let $F_1=\{a_1,\dots , a_q\}$ and $|F_2|=p$ where $a_1$ is a free vertex. For each $i=2,\dots, q$ let $t_i=|N(a_i)\cap F_2|$. Observe that by Remark~\ref{rk:b0}
\begin{equation}\label{eq:b0}
b_{0}(J(G))= pq-\sum_{i=2}^{q}t_i
\end{equation}

Using Theorem~\ref{thm:recursive formula}, Proposition~\ref{complete graph} and Eq.\ref{eq:b0} we get
\begin{equation*}
\begin{split}
b_1(J(G)) & = b_1(J(G\setminus N[a_1])) + \sum_{i=2}^q b_1(J(G\setminus N[a_i]))+\sum_{i=2}^q b_0(J(G\setminus N[a_i]))  \\
& =  b_1(J(K_p)) + \sum_{i=2}^q b_1(J(K_{p-t_i}))+\sum_{i=2}^q b_0(J(K_{p-t_i}))\\
&= (p-1)+\sum_{i=2}^q(p-t_i-1)+\sum_{i=2}^q(p-t_i) \\
&= 2pq-p-q-2\sum_{i=2}^{q}t_i \\
&= 2b_0(J(G))-n.
\end{split}
\end{equation*}
Similarly,
\begin{equation*}
\begin{split}
b_2(J(G)) & = b_2(J(G\setminus N[a_1])) + \sum_{i=2}^q b_2(J(G\setminus N[a_i]))+\sum_{i=2}^q b_1(J(G\setminus N[a_i]))  \\
& =  b_2(J(K_p)) + \sum_{i=2}^q b_2(J(K_{p-t_i}))+\sum_{i=2}^q b_1(J(K_{p-t_i}))\\
&= 0+\sum_{i=2}^q(p-t_i-1) \\
&= b_0(J(G))-n+1.
\end{split}
\end{equation*}
\end{proof}
 It would be an interesting problem to extend Proposition \ref{prop:1-dimensional unmixed} to unmixed chordal graphs with higher dimensional independence complexes. More generally, characterizing Betti numbers of cover ideals of chordal graphs would be of interest.

\end{document}